\newtheorem{theorem}[equation]{Theorem}
\newtheorem{lemma}[equation]{Lemma}
\newtheorem{conjecture}[equation]{Conjecture}
\newcommand{\eref}[1]{(\ref{e.#1})}
\newcommand{\tref}[1]{Theorem \ref{t.#1}}
\newcommand{\lref}[1]{Lemma \ref{l.#1}}
\newcommand{\cref}[1]{Corollary \ref{c.#1}}
\numberwithin{equation}{section}
\numberwithin{figure}{section}
\newcommand{\R}{\mathbf{R}}
\newcommand{\N}{\mathbf{N}}
\newcommand{\E}{\mathbf{E}}
\newcommand{\V}{\mathbf{V}}
\renewcommand{\P}{\mathbf{P}}
\newcommand{\id}{\operatorname{1}}
\newcommand{\ep}{\varepsilon}
\newcommand{\tr}{\operatorname{tr}}
\newcommand{\mat}[1]{{\left( \begin{matrix} #1 \end{matrix} \right)}}
\begin{document}

\title{Random band matrix localization by scalar fluctuations}

\author{Nixia Chen}

\address{Department of Mathematics, The University of Chicago, Chicago, IL}

\email{nixiachen@uchicago.edu}

\author{Charles K Smart}

\address{Department of Mathematics, Yale University, New Haven, CT}

\email{charles.smart@yale.edu}

\begin{abstract}
We show the eigenvectors of a Gaussian random band matrix are localized when the band width is less than the $1/4$ power of the matrix size.  Our argument is essentially an optimized version of Schenker's proof of the $1/8$ exponent.
\end{abstract}

\maketitle

\section{Introduction}

\subsection{Some context and prior work}

A random band matrix is a random matrix whose entries vanish outside of a band around the diagonal.  The standard random band matrix model uses independent Gaussian entries: Fix integers $N \geq W \geq 1$ and sample $A \in \R^{N \times N}$ according to the probability density $a \mapsto Z^{-1} e^{-\frac{W}{4} \tr a^* a}$ on the space of matrices $a \in \R^{N \times N}$ that satisfy $a^* = a$ and $a_{i,j} = 0$ for $|i - j| > W$.  It is conjectured that, when $\ep > 0$ small and $N \geq C_\ep$ large, the eigenvectors of $A$ are typically localized when $W \leq N^{1/2-\ep}$ and typically delocalized when $W \geq N^{1/2+\ep}$.  See Casati, Molinari, and Izrailev \cite{Casati-Molinari-Izrailev}.

At the time of writing, the rigorous state of the art for this model was localization when $W \leq N^{1/7-\ep}$ and delocalization when $W \geq N^{3/4+\ep}$.  These were proved by Peled, Schenker, Shamis, and Sodin \cite{Peled-Schenker-Shamis-Sodin} and Bourgade, Yau, and Yin \cite{Bourgade-Yau-Yin}, respectively.  In this article we advance the localization regime to $W \leq N^{1/4-\ep}$ by optimizing the proof of the $1/8$ exponent in Schenker \cite{Schenker}.  In a simultaneous and independent work, Cipolloni, Peled, Schenker, and Shapiro \cite{Cipolloni-Peled-Schenker-Shapiro} also prove localization when $W \leq N^{1/4-\ep}$.

There is a related integrable band matrix model where the $N^{1/2}$ threshold is known to be sharp.  See Shcherbina \cite{Shcherbina} and Shcherbina and Shcherbina \cite{Shcherbina-Shcherbina}.  There is also a recent breakthrough on delocalization when $W \geq N^\ep$ for a $d \geq 8$ dimensional toroidal band model.  See Yang, Yau, and Yin \cites{Yang-Yau-Yin-1, Yang-Yau-Yin-2}.

\subsection{Main result}

For convenience we work with random block tridiagonal matrices.  We expect our methods can be easily adapted to handle standard random band matrices.  We sample a random block tridiagonal matrix
\begin{equation*}
A = \mat{ A_{1,1} & A_{1,2} \\ A_{1,2}^* & A_{2,2} & \ddots \\ & \ddots & \ddots & A_{N-1,N} \\ & & A_{N-1,N}^* & A_{N,N} } \in (\R^{M \times M})^{N \times N}
\end{equation*}
according to the probability density
\begin{equation}
\label{e.hamiltonian}
a \mapsto Z^{-1} e^{- \frac{M}{4} \tr a^* a}
\end{equation}
on the space of block matrices $a \in (\R^{M \times M})^{N \times N}$ that satisfy $a^* = a$ and $a_{i,j} = 0$ for $|i - j| > 1$.  We prove exponential off-diagonal decay of the resolvent:

\begin{theorem}
\label{t.main}
If $\ep > 0$, $|\lambda| < \ep^{-1}$, and $N, M \geq C_\ep$, then
\begin{equation}
\label{e.resolventdecay}
\E \| ((A-\lambda)^{-1})_{i,j} \|^{1-\ep} \leq e^{- M^{-3-\ep} |i-j|}
\end{equation}
holds for all $1 \leq i, j \leq N$.
\end{theorem}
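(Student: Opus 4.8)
The plan is to run the fractional moment method for the off-diagonal resolvent along the block chain, with fractional exponent $s = 1-\ep$, reducing the statement to a single one-step contraction estimate that carries all of the content. Two preliminary ingredients are standard. The first is a Wegner-type a priori bound: because $A$ has the Gaussian density \eref{hamiltonian}, for each $k$ the conditional law, given the remaining blocks, of the Schur complement $A_{k,k} - \lambda - A_{k,k+1} G^{(k)}_{k+1,k+1} A_{k,k+1}^{*}$ has a density on the symmetric matrices that is locally bounded, uniformly in the conditioning; here and below $A^{(k)}$ denotes $A$ with its first $k$ block rows and columns deleted and $G^{(k)} = (A^{(k)}-\lambda)^{-1}$. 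Since $\lambda$ is real, $A-\lambda$ is almost surely invertible, and this bound controls the fractional moments of all the Green's-function blocks that appear and, via resolvent-identity comparisons, lets me pass freely between $G_{i,j}$ and its depleted and one-sided variants. The second ingredient is the tridiagonal structure: the Schur complement formula gives, for $i < j$, the product representation $G^{(i-1)}_{i,j} = (-1)^{j-i} R_i A_{i,i+1} R_{i+1} A_{i+1,i+2} \cdots R_{j-1} A_{j-1,j} R_j$, where $R_k := G^{(k-1)}_{k,k} = (A_{k,k}-\lambda-A_{k,k+1}G^{(k)}_{k+1,k+1}A_{k,k+1}^{*})^{-1}$, equivalently the one-step recursion $G^{(k-1)}_{k,j} = -R_k A_{k,k+1} G^{(k)}_{k+1,j}$.

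With these in hand the problem reduces to bounding $\E\|G^{(i-1)}_{i,j}\|^{s}$. I would condition at step $k$ on $\mathcal F_{\ge k+1}$, the $\sigma$-algebra generated by the blocks at positions $\ge k+1$: this fixes $B := G^{(k)}_{k+1,k+1}$ and $\Gamma := G^{(k)}_{k+1,j}$, whereas $X := A_{k,k+1}$ and $Y := A_{k,k}$ are fresh independent Gaussians and the $k$-th factor of the product equals $-R_k X = -(Y-\lambda-XBX^{*})^{-1}X$. The heart of the matter is a one-step contraction: the transfer operator $\Gamma \mapsto \E_{X,Y}\big[\Psi\big(-(Y-\lambda-XBX^{*})^{-1}X\,\Gamma\big)\big]$ --- acting on an auxiliary functional $\Psi$ that is comparable, up to $M$-independent constants, to $\|\cdot\|^{s}$, and chosen (for instance as an average of $\|\Gamma v\|^{s}$ over a suitable law of unit vectors $v$, or of $\|\Gamma U\|^{s}$ over rotations $U$) precisely so that the estimate reproduces itself under composition --- has norm at most $1 - c M^{-3}$ for all $M \ge C_\ep$, uniformly over symmetric $B$, with $c = c(\ep) > 0$.

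The reason such a gain is available, and the meaning of \emph{scalar fluctuations}, is that rotation invariance of the Gaussian block $X = A_{k,k+1}$ reduces the $M$-dimensional randomness to that of a single scalar: after the conditioning, the propagated vector picks up a factor whose size is controlled by a chi-square-type quantity such as $\|X^{*}w\|^{2}$, together with the near-singular direction $w$ of $Y-\lambda-XBX^{*}$ and its near-uniform law on the sphere. Weighing the fractional moment of this scalar against the locally bounded density of the smallest singular value of the Schur complement and the $1/M$-scale spreading of the exit direction, one finds it lies below $1$ by an amount of order $M^{-3}$, a product of reciprocal powers of $M$ coming from the $1/M$ variance of the Gaussian entries, the measure of the set of directions on which the gain is realized, and the level statistics of the Schur complement; all of these must be accounted for exactly. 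The delicate point, and the main obstacle, is that $X$ enters both the matrix being inverted and the factor standing in front of $\Gamma$, so one has to control the joint distribution of $w$ and of $X^{*}w$; disentangling this correlation carefully is exactly the optimization of Schenker's one-step estimate, pushed so that the per-step gain is the full $M^{-3}$ rather than the smaller power underlying the $1/8$ exponent.

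Finally I would iterate the one-step contraction for $k = i, i+1, \dots, j-1$, peeling one factor off the product representation at each step and handling the last block $R_j$ with the a priori bound, to obtain $\E\|G^{(i-1)}_{i,j}\|^{s} \le C\,(1-cM^{-3})^{\,j-i} \le C\,e^{-cM^{-3}(j-i)}$. Since $c$ is a fixed positive constant, this is at most $e^{-M^{-3-\ep}(j-i)}$ once $M$ is large and $(j-i)$ exceeds a threshold of order $M^{3}\log C$; the room between the exponents $M^{-3}$ and $M^{-3-\ep}$ in \eref{resolventdecay} is what absorbs the constant $C$, and the remaining bounded range of $|i-j|$, together with the diagonal case $i=j$, is immediate from the a priori bound and the $G$ versus $G^{(i-1)}$ comparison. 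Everything here except the one-step contraction with the sharp exponent $M^{-3}$ is routine fractional-moment bookkeeping; that estimate is where all the work lies.
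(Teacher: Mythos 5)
Your plan defers the entire content of the theorem to the ``one-step contraction'' claim --- that the transfer operator attached to $\Gamma \mapsto -(Y-\lambda-XBX^*)^{-1}X\Gamma$ contracts a fractional-moment functional by a factor $1-cM^{-3}$, \emph{uniformly over the conditioning} $B$ --- and you supply only a heuristic for it. This is exactly the hard part, and as stated it is doubtful: the one-step factor $R_kA_{k,k+1}$ typically has norm much larger than $1$ (the smallest singular value of the Schur complement is of order $M^{-1}$), so $\E\|\cdot\|^{1-\ep}$ does not visibly shrink step by step, and no uniform-in-$B$ gain can be expected --- for atypical conditioning there is simply no gain to extract. The paper's corresponding per-step input, \lref{scalarfluctions}, is of a different nature and is correspondingly weaker: it is a lower bound $\V(\log S_k \mid D_{k-1},\bar D_k,D_{k+1},B_{k-1},B_k)\geq M^{-3-\ep}$ holding only with probability $1-M^{-c_\ep}$ over the conditioning, obtained by computing the exact conditional density of the scalar $S_k=\|D_k\|$ given its direction $\bar D_k$ and the neighboring blocks (\lref{conditionallaw}, using the Gaussian law \eref{hamiltonian}) and controlling the derivatives of its log-density (\lref{uniformlylogconcave}, \lref{logvariance}). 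Your reading of ``scalar fluctuations'' as a chi-square factor $\|X^*w\|^2$ along a near-singular direction is not the mechanism used, and nothing in your sketch produces the $M^{-3}$ rate.

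Structurally, the paper does not prove \tref{main} by iterating a fractional-moment contraction at all. The per-step variance bounds are summed via the conditional-independence decomposition \eref{schenker} to give the linear-in-$N$ fluctuation lower bound \eref{fluctuations} for $\log\|((A-\lambda)^{-1})_{1,N}\|$, and the passage from \lref{fluctuations} to the fractional-moment decay \eref{resolventdecay} is quoted from Schenker and Bourgain; that reduction (combining a priori fractional-moment bounds with growing logarithmic variance to force exponential smallness) is itself nontrivial and is precisely what substitutes for the per-step contraction you are assuming. So the gap is concrete: your central estimate is unproven, likely false in the uniform form you state, and is not something the paper's techniques (or its cited references) provide in that form; without it the iteration in your final paragraph has nothing to iterate.
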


Here and throughout the paper, we let $C > 1 > c > 0$ denote positive universal constants that may differ in each instance.  We use subscripts to denote dependence, so that $C_\ep > 1 > c_\ep > 0$ are positive constants that depend on $\ep$.

The resolvent bound \eref{resolventdecay} is known to imply localization of the eigenvectors when $N \geq M^{3+\ep}$.  See for example Aizenman, Friedrich, Hundertmark, and Schenker \cite{Aizenman-Friedrich-Hundertmark-Schenker}.  Since the flattened version of $A$ is an $NM \times NM$ matrix, this corresponds to localization when $W \leq N^{1/4-\ep}$ for the standard random band model.

By arguments of Schenker \cite{Schenker} and Bourgain \cite{Bourgain}, \tref{main} can be deduced from the following lower bound on the logarithmic fluctuations of the corner block of the resolvent.

\begin{lemma}
\label{l.fluctuations}
If $\ep > 0$, $|\lambda| < \ep^{-1}$, and $N, M \geq C_\ep$, then
\begin{equation}
\label{e.fluctuations}
\V \log  \| ((A - \lambda)^{-1})_{1,N} \| \geq M^{-3-\ep} N.
\end{equation}
\end{lemma}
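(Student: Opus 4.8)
The plan is to exhibit a single scalar degree of freedom in the matrix $A$ whose variation produces a controlled multiplicative change in $\|((A-\lambda)^{-1})_{1,N}\|$, and then to propagate that change across the chain. Write $G(z) = (A-z)^{-1}$. The corner block $G_{1,N}$ is built from a product of transfer-type contributions across the $N-1$ links, so that conditionally on all but one off-diagonal block $A_{k,k+1}$, the norm $\log\|G_{1,N}\|$ depends on $A_{k,k+1}$ in an essentially affine-plus-monotone way through a resolvent identity. The strategy, following Schenker, is to isolate a \emph{scalar} parameter $t$ — for instance, scaling $A_{k,k+1} \mapsto A_{k,k+1} + t\, u v^*$ for fixed unit vectors, or more naturally a global scalar like $\lambda \mapsto \lambda + t$ acting on one block — and show that $\partial_t \log\|G_{1,N}\|$ is bounded below in absolute value with probability bounded below, on a scale $t \sim M^{-3/2}$ per link (since each $M\times M$ Gaussian block has entries of size $M^{-1/2}$ and the relevant second-order fluctuation accumulates a factor $M^{-1}$ from the block dimension and $M^{-1/2}$ from the Gaussian weight normalization). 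Summing independent contributions of order $M^{-3/2}$ in the logarithm over $N-1$ links gives variance of order $M^{-3}N$, up to the $M^{-\ep}$ slack.

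First I would set up a martingale / Efron–Stein decomposition of $\log\|G_{1,N}\|$ along the filtration generated by revealing the blocks $A_{1,1}, A_{1,2}, A_{2,2}, A_{2,3}, \dots$ one at a time. The variance is then at least the sum of the conditional variances of the martingale increments, so it suffices to show that for a positive fraction of the links $k$, the conditional variance $\V(\log\|G_{1,N}\| \mid \mathcal{F}_{k}) \geq c\, M^{-3-\ep}$. Second, I would use the Schur complement / resolvent expansion to write $G_{1,N}$ in terms of the block $A_{k,k+1}$ and the two ``half-chain'' resolvents $G^{L}$ (columns/rows $1,\dots,k$ with $k{+}1,\dots,N$ deleted) and $G^{R}$ (the complementary corner), both independent of $A_{k,k+1}$; one gets schematically $G_{1,N} = G^{L}_{1,k}\, A_{k,k+1}\, \big(\text{something}(A_{k,k+1}, G^{R})\big)\, G^{R}_{k+1,N}$, and the key point is that $\log\|G_{1,N}\|$ moves by $\Theta(1)$ when $A_{k,k+1}$ is rescaled by a factor $1+s$ for $s$ of order one, hence by $\Theta(s)$ for small $s$. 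Third — the quantitative heart — I would argue that on a good event of probability $\geq c$ (controlling $\|G^{L}\|$, $\|G^{R}\|$, and non-degeneracy of the relevant singular vectors, which holds because $\mathrm{Im}\,\lambda$ or the self-consistent equation keeps the half-chain resolvents of order one), a Gaussian scalar perturbation of $A_{k,k+1}$ of standard deviation comparable to its natural scale $M^{-1/2}$ (split among $M$ scalar directions, so effectively $M^{-1}$ in the right combination, with an extra $M^{-1/2}$ from how the perturbation feeds into the norm) induces a fluctuation of $\log\|G_{1,N}\|$ with conditional variance $\geq c\,M^{-3-\ep}$. Summing over the $\Theta(N)$ links yields \eref{fluctuations}.

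The main obstacle I expect is the third step: showing the lower bound on the per-link conditional variance is \emph{not} destroyed by cancellation. Because $\|G_{1,N}\|$ is an operator norm of an $M\times M$ block, its logarithm could in principle be insensitive to a given scalar perturbation if that perturbation happens to act orthogonally to the top singular subspace, or if two competing terms in the resolvent expansion cancel. Controlling this requires (i) a good-event analysis guaranteeing that the half-chain resolvents $G^{L}$, $G^{R}$ are both of order one and ``spread out'' enough that a random scalar direction has $\Omega(M^{-1/2})$ overlap with the relevant singular vectors, and (ii) an anti-concentration argument — e.g. a Nazarov/Carbery–Wright-type inequality, or direct second-moment computation — showing the map from the scalar parameter to $\log\|G_{1,N}\|$ has derivative bounded below in $L^2$ over the Gaussian fiber. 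The delicate accounting is tracking exactly how the three small factors ($M^{-1/2}$ from entry size, $M^{-1/2}$ from the block being $M$-dimensional so the ``useful'' direction is a $M^{-1/2}$-fraction of the perturbation, and $M^{-1/2}$ from the Jacobian of $\log\|\cdot\|$ composed with the resolvent identity) combine to exactly $M^{-3/2}$ in the increment, hence $M^{-3}$ in the variance — any loss here degrades the exponent and is presumably where the $1/8 \to 1/4$ optimization over Schenker's argument actually lives.
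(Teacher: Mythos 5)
Your overall philosophy matches the paper --- isolate one scalar degree of freedom per link, show its contribution to $\log \| ((A-\lambda)^{-1})_{1,N} \|$ has conditional variance of order $M^{-3-\ep}$, and sum over $\Theta(N)$ links --- but the quantitative heart of the argument, which you yourself flag as the ``main obstacle,'' is exactly the step that is missing, and your proposed route to it is substantially harder than what the paper does. Two concrete problems. First, in your martingale/Efron--Stein framing, the increment at link $k$ is the change in $\E(\log\|G_{1,N}\| \mid \mathcal F_k)$ upon revealing $A_{k,k+1}$; this conditional expectation averages over the entire unrevealed part of the chain, and you give no mechanism ruling out that this averaging flattens the dependence on $A_{k,k+1}$ --- the anti-concentration you invoke (Carbery--Wright, overlap of a random direction with the top singular subspace of an $M\times M$ block) is precisely the cancellation problem you name but do not resolve, and your $M^{-1/2}\times M^{-1/2}\times M^{-1/2}$ bookkeeping is a heuristic, not an estimate. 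Second, your chosen scalar (a rank-one or scaling direction inside the off-diagonal block $A_{k,k+1}$) is not the one for which the Gaussian structure gives an exactly computable one-dimensional conditional law, so none of the needed lower bound comes for free.

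The paper sidesteps both issues. It uses the Schur-complement product formula \eref{product}, writes $D_k = S_k \bar D_k$ with $S_k = \|D_k\|$, and applies the elementary inequality $\V(X) \geq \E\,\V(X \mid \mathcal G)$ with $\mathcal G$ generated by all the directions $\bar D_j$ and all the $B_j$; conditionally, $\log\|G_{1,N}\|$ differs from $-\sum_k \log S_k$ by a $\mathcal G$-measurable term, and conditional independence of $S_k$ from the far blocks given $(D_{k-1},\bar D_k, D_{k+1}, B_{k-1}, B_k)$ lets the conditional variance split over even $k$, as in \eref{schenker}. The per-link bound \eref{expectedscalarfluctuations} is then proved not by anti-concentration of an operator norm under a generic perturbation, but by computing the \emph{exact} conditional density of the scalar $S_k$ from the Gaussian weight (\lref{conditionallaw}), bounding the second derivative of its log-density by $M^{3+\ep}$ via the matrix estimates of \lref{matrixbounds} (the stiffest term is $\frac M4\|(s^{-1}-1)B_k^* D_k^{-1}B_k\|_F^2$ with $\|B_k^* D_k^{-1}B_k\|_F \leq M^{1+\ep}$), and invoking an elementary variance lower bound for such densities (\lref{logvariance}). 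So while your target exponent and summation scheme are right, the proposal as written has a genuine gap at the per-link variance lower bound, and closing it by your route would require proving the very anti-concentration and non-cancellation statements that the paper's choice of scalar ($S_k$, conditioned on its direction) is designed to make unnecessary.
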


In particular, the logarithmic fluctuation lower bound \eref{fluctuations} is the main contribution of our paper.  The reader interested in the reduction of \tref{main} to \lref{fluctuations}, which is arguably the most complicated part of the proof, is referred to Schenker \cite{Schenker} and Bourgain \cite{Bourgain}.

We remark that our only essential use of the Gaussian law \eref{hamiltonian} of $A$ is to compute the conditional law of $S_k$ given $D_{k-1}, \bar D_k, D_{k+1}, B_{k-1}, B_k$ in \lref{conditionallaw}.  We conjecture that this usage can be removed, and that the same matrices treated in Schenker \cite{Schenker} can also be handled using our methods.

\subsection{Scalar fluctuations}

We recall from Schenker \cite{Schenker} the reformulation of the lower bound \eref{fluctuations} in terms of the scalar fluctuations of a cocycle of a Markov chain.  Assume the hypotheses of \lref{fluctuations}.  Almost surely, Gaussian elimination yields the explicit formula
\begin{equation}
\label{e.product}
((A-\lambda)^{-1})_{1,N} = D_1^{-1} B_1 D_2^{-1} B_2 \cdots D_{N-1}^{-1} B_{N-1} D_N^{-1},
\end{equation}
where
\begin{equation}
\label{e.markov}
\begin{cases}
D_1 = A_{1,1} - \lambda \\
B_k = - A_{k,k+1} \\
D_{k+1} = A_{k+1,k+1} - \lambda - B_k^* D_k^{-1} B_k
\end{cases}
\end{equation}
for $k = 1, ..., N-1$.  We bound the logarithmic fluctuations of the product \eref{product} from below by the sum of the conditional logarithmic fluctuations of the norms of its terms.  First, we let
\begin{equation*}
S_k = \| D_k \|
\end{equation*}
and
\begin{equation*}
\bar D_k = \| D_k \|^{-1} D_k,
\end{equation*}
and then observe
\begin{equation*}
\V \log  \| ((A - \lambda)^{-1})_{1,N} \| \geq \E ( \V ( \log ( S_1 \cdots S_N ) | \bar D_1, ..., \bar D_N, B_1, ..., B_{N-1})).
\end{equation*}
Second, we use independence to distribute the variance through the logarithm.  Since $S_k$ and $(D_1, ..., D_{k-2}, D_{k+2}, ..., D_N, B_1, ..., B_{k-2}, B_{k+1}, ..., B_{N-1})$ are conditionally independent given $(D_{k-1}, \bar D_k, D_{k+1}, B_{k-1}, B_k)$, it follows that
\begin{equation}
\label{e.schenker}
\V( \log  \| ((A - \lambda)^{-1})_{1,N} \| ) \\
\geq \sum_{\substack{k \in 2 \N \\ 1 < k < N}} \E( \V( \log S_k | D_{k-1}, \bar D_k, D_{k+1}, B_{k-1}, B_k)).
\end{equation}
Here we dropped the odd terms in order to achieve conditional independence.  The lower bound \eref{fluctuations} follows from the inequality \eref{schenker} and the lower bound
\begin{equation}
\label{e.expectedscalarfluctuations}
\min_{1 < k < N} \E( \V( \log S_k | D_{k-1}, \bar D_k, D_{k+1}, B_{k-1}, B_k)) \geq M^{-3-\ep}.
\end{equation}
We prove the lower bound \eref{expectedscalarfluctuations} by explicitly computing the conditional law of $S_k$ and estimating the log concavity of its density.  See \lref{scalarfluctions} below.

\subsection{Discussion of optimality}

We expect the $1/4$ exponent is the best possible for any argument that relies on the scalar fluctuations of the terms in the product \eref{product}.  Indeed, for energies close to zero, we conjecture an upper bound on the logarithmic variance of the norms of the terms:

\begin{conjecture}
\label{j.optimal}
If $\ep > 0$ is small, $|\lambda| < \ep$, $M \geq C_\ep$, and $1 < k < N$, then
\begin{equation*}
\E( \V( \log S_k | D_{k-1}, \bar D_k, D_{k+1}, B_{k-1}, B_k)) \leq M^{-3+\ep}.
\end{equation*}
\end{conjecture}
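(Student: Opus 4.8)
The plan is to reduce the conjectured bound to a Gaussian Poincar\'e inequality on a coarser $\sigma$-algebra, followed by a conversion from $S_k$ to $\log S_k$ that uses the typical size of $S_k$. Write $\mathcal{F} = \sigma(D_{k-1}, \bar D_k, D_{k+1}, B_{k-1}, B_k)$ for the conditioning $\sigma$-algebra and $\mathcal{F}' = \sigma(D_{k-1}, B_{k-1}, B_k) \subseteq \mathcal{F}$ for the one that forgets $\bar D_k$ and $D_{k+1}$. Given $\mathcal{F}'$ we have $D_k = A_{k,k} - c$, where $c := \lambda + B_{k-1}^* D_{k-1}^{-1} B_{k-1}$ is deterministic and $A_{k,k}$ has law $\propto e^{-\frac{M}{4}\tr a^2}$, i.e.\ $N(0, 2/M)$ in Frobenius coordinates. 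Since $S_k = \|A_{k,k} - c\|$ is a $1$-Lipschitz function of $A_{k,k}$, the Gaussian Poincar\'e inequality gives $\V(S_k \mid \mathcal{F}') \le 2/M$ pointwise, and since conditioning reduces variance on average (the law of total variance for $\mathcal{F}' \subseteq \mathcal{F}$),
\begin{equation*}
\E\bigl( \V( S_k \mid \mathcal{F}) \bigr) \le \E\bigl( \V( S_k \mid \mathcal{F}') \bigr) \le \frac{2}{M}.
\end{equation*}
This is the only place the Gaussian law \eref{hamiltonian} is used; it is the exact analogue, for the upper bound, of the log-concavity estimate behind \lref{fluctuations}.

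The decisive second step is to pass from $\V(S_k \mid \mathcal{F})$ to $\V(\log S_k \mid \mathcal{F})$. The point is that, at the relevant stage of the recursion \eref{markov} and for $|\lambda| < \ep$, the matrix $D_{k-1}$ has an eigenvalue within $M^{-1+\ep}$ of $0$ with overwhelming probability --- equivalently $\|D_{k-1}^{-1}\| \ge M^{1-\ep}$ --- so, writing $v$ for the corresponding near-null unit vector of $D_{k-1}$ and noting $\|B_{k-1}^* v\|^2 \asymp 1$ by Gaussian concentration (as $v$ is independent of $B_{k-1}$),
\begin{equation*}
S_k = \|A_{k,k} - c\| \gtrsim \|B_{k-1}^* v\|^2 \, \|D_{k-1}^{-1}\| - |\lambda| - \|A_{k,k}\| \gtrsim M^{1-\ep}.
\end{equation*}
Moreover in this regime $D_k \approx -c$, so $\bar D_k \approx -c/\|c\|$ and the explicit conditional density $p(s) \propto s^{m-1} e^{-\frac{M}{4}(\tau s^2 + 2\beta s + 2\gamma s^{-1} + \delta s^{-2})}$ from \lref{conditionallaw} concentrates around $-\beta/\tau = -\langle \bar D_k, c\rangle_F / \tr \bar D_k^2 \asymp \|c\| \gtrsim M^{1-\ep}$ at scale $O(M^{-1/2})$; hence $s_* := \E(S_k \mid \mathcal{F}) \gtrsim M^{1-\ep}$ on an $\mathcal{F}$-event $\Omega_0$ whose complement has probability $\le e^{-M^c}$. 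Using $\V(\log S_k \mid \mathcal{F}) \le \E(\log^2(S_k/s_*)\mid \mathcal{F})$, the bound $\log^2(S_k/s_*) \le 4(S_k - s_*)^2/s_*^2$ on the (conditionally) overwhelming event $S_k/s_* \in [\tfrac12,2]$, and the light tails of $p$ off that event, one gets
\begin{equation*}
\E\bigl( \V(\log S_k \mid \mathcal{F})\, \id_{\Omega_0} \bigr) \le 4 M^{-2+2\ep}\, \E\bigl( \V(S_k\mid \mathcal{F}) \bigr) + M^{O(1)} e^{-M^c} \le 9 M^{-3+2\ep},
\end{equation*}
while the exceptional contribution is $\le M^{O(1)}\P(\Omega_0^c) = e^{-M^{c'}}$; decreasing $\ep$ completes the proof.

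The main obstacle is the spectral input of the second step: that $\|D_{k-1}^{-1}\| \ge M^{1-\ep}$ with overwhelming probability, i.e.\ that $\lambda$ approaches the spectrum of the block submatrix $A^{(k-1)}$, weighted by the overlap with its last block, at the generic rate $M^{-1}$. This is a quantitative statement about the stationary law of the Markov chain \eref{markov}: it requires the macroscopic spectral measure carried by the last block of $D_{k-1}$ to have density of order one at $\lambda$ --- an optimal local-law/rigidity estimate for the recursion's effective operators --- which the paper deliberately avoids, since only crude lower bounds are needed for \lref{fluctuations}. For bounded $k$ it is elementary; for general $k$ a stability analysis of \eref{markov} should suffice, but carrying this out is the crux, and it is also exactly where the hypothesis $|\lambda| < \ep$ is essential: near the edge of the relevant spectrum the density of states collapses, $S_k$ is no longer typically of order $M$, and the bound on $\V(\log S_k\mid \mathcal{F})$ degrades. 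As a consistency check, the same exponent $M^{-3}$ drops out of the \lref{conditionallaw} density via the Brascamp--Lieb inequality applied to $u = \log S_k$, whose density $q$ satisfies $-(\log q)'' = M\tau e^{2u} + M\delta e^{-2u} + \tfrac{M}{2}\beta e^{u} + \tfrac{M}{2}\gamma e^{-u}$ with all four terms of size $\asymp M^3$ at the mode $e^{u} \asymp M$; but the linear-in-$e^{\pm u}$ terms carry the wrong sign there, so that route needs a delicate no-cancellation estimate and seems strictly harder.
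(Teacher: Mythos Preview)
The statement you are attempting is labeled a \emph{conjecture} in the paper; the authors offer no proof and present it only as the expected obstruction to pushing scalar-fluctuation methods past the $1/4$ exponent. There is thus no paper proof to compare against, and your proposal should be read as a strategy toward an open problem rather than an alternative to an existing argument.

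Your first step is sound: conditionally on $\mathcal{F}' = \sigma(D_{k-1},B_{k-1},B_k)$ the block $A_{k,k}$ remains Gaussian with density $\propto e^{-\frac{M}{4}\tr a^2}$ and is independent of $\mathcal{F}'$, so $S_k = \|A_{k,k} - c\|$ is $1$-Lipschitz in Frobenius norm and Gaussian Poincar\'e gives $\V(S_k\mid\mathcal{F}') \le C M^{-1}$; the law of total variance then yields $\E\V(S_k\mid\mathcal{F}) \le C M^{-1}$ since $\mathcal{F}' \subseteq \mathcal{F}$. This part is a genuine and clean reduction.

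The substantive gap is exactly where you locate it. To convert $\V(S_k\mid\mathcal{F}) \lesssim M^{-1}$ into $\V(\log S_k\mid\mathcal{F}) \lesssim M^{-3+\ep}$ you need $s_* = \E(S_k\mid\mathcal{F}) \gtrsim M^{1-\ep}$ on an event whose complement contributes negligibly, and you trace this back to the claim that $D_{k-1}$ has an eigenvalue within $M^{-1+\ep}$ of zero with sufficiently high probability. Nothing in the paper approaches this: \lref{matrixbounds} gives only $\|D_k^{-1}\|_F \le M^{1+\ep}$ and $\|B_k^* D_k^{-1} B_k\|_F \ge M^{1/2-\ep}$ with probability $1 - M^{-c_\ep}$, neither of which bounds $\|D_{k-1}^{-1}\|$ from below. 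Establishing such a lower bound uniformly in $k$ is, as you say, a local-law statement for the stationary measure of the chain \eref{markov}, and is the genuine missing ingredient.

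One point you pass over too quickly is the treatment of the exceptional events. You write the contribution of $\Omega_0^c$ and of the conditional tail $\{S_k/s_* \notin [\tfrac12,2]\}$ as $M^{O(1)}$ times a small probability, implicitly assuming $\V(\log S_k\mid\mathcal{F}) \le M^{O(1)}$ deterministically. This is plausible---the $t^{(M^2+M-2)/2}$ factor in the density of \lref{conditionallaw} forces polynomial decay at $0$, and the $e^{-\frac{M}{4}\|\bar D_k\|_F^2 t^2}$ factor with $\|\bar D_k\|_F \ge 1$ forces Gaussian decay at infinity, regardless of the conditioning---but it is not automatic and deserves a line of justification. Also, ``overwhelming'' probability is stronger than what is strictly needed: since the conditional log-variance is at worst polynomial in $M$, probability $\le M^{-C}$ for the bad event would already suffice, which may ease the spectral input somewhat.
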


Intuitively, the fluctuations of the norm $S_k$ should contain only an $O(M^{-2})$ fraction of the total randomness in the matrix $D_k$.  In particular, any improvement beyond the exponent $1/4$ should give finer information about the Lyapunov exponents of the cocycle \eref{product} of the Markov chain \eref{markov}.

\subsection{Acknowledgements}

The second author was partially supported by the National Science Foundation via NSF-DMS-2137909.

\section{Random full matrix bounds}

We collect some known estimates for random full matrices.  Recall that $\| A \| = \max_{\| x \| = 1} \| A x \|$ is the operator norm of $A$, $\| A \|_F = (\tr A^* A)^{1/2}$ is the Frobenius norm of $A$, and $\tr \id_I(A)$ is the number of eigenvalues of $A$ in the interval $I \subseteq \R$.

\begin{lemma}
\label{l.randomfullmatrices}
If $E \in \R^{M \times M}$ has independent $N(0,M^{-1})$ Gaussian entries, $G = (E + E^*)/\sqrt 2$, and $H \in \R^{M \times M}$ is symmetric and deterministic, then
\begin{align}
\label{e.wegner}
& \E \tr \id_I(G + H) \leq C M |I|, \\
\label{e.inversefrobenius}
& \P( \| (G + H)^{-1} \|_F \geq t ) \leq C M t^{-1}, \\
\label{e.dotproduct}
& \E (\tr G H)^2 = 2 M^{-1} \| H \|_F^2, \\
\label{e.operator}
& \P( \| E \| \geq C ) \leq e^{-c M}, \\
\label{e.conjugatedfrobenius}
\mbox{and} \quad & \P( \| E^* H E \|_F \leq c \| H \|_F) \leq e^{-c M}
\end{align}
hold for all intervals $I \subseteq \R$ and $t > 0$.
\end{lemma}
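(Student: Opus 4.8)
The plan is to treat the five estimates in increasing order of difficulty. The identity \eref{dotproduct} is an immediate second moment computation: substituting $G = (E + E^*)/\sqrt 2$ and using $\E[E_{ij}E_{kl}] = M^{-1}\delta_{ik}\delta_{jl}$ together with $H = H^*$ gives $\tr GH = \sqrt 2 \sum_{i,j} E_{ij} H_{ij}$, whence $\E(\tr GH)^2 = 2 M^{-1}\sum_{i,j} H_{ij}^2 = 2 M^{-1}\|H\|_F^2$. For \eref{operator}, the map $E \mapsto \|E\|$ is $1$-Lipschitz for the Frobenius metric and satisfies $\E\|E\| \leq C$, so Gaussian concentration gives $\P(\|E\| \geq C + s) \leq e^{-cMs^2}$; now take $s = 1$. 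The Wegner estimate \eref{wegner} is a standard input: for $I = [x - \eta, x + \eta]$ one has the deterministic bound $\tr\id_I(G+H) \leq 2\eta\operatorname{Im}\tr(G+H-x-i\eta)^{-1}$ (valid since $\id_I(\lambda) \leq 2\eta^2/((\lambda - x)^2 + \eta^2)$ for real $\lambda$), and the content is that the averaged density of states $x \mapsto M^{-1}\E\operatorname{Im}\tr(G+H-x-i\eta)^{-1}$ is bounded by a universal constant, uniformly in $x \in \R$ and $\eta > 0$. This holds because the spectral measure of $H$, convolved freely with the semicircle law, has density at most $\pi^{-1}$ (as one sees from the fixed-point equation for its Stieltjes transform), up to a concentration error for $M^{-1}\tr(G+H-z)^{-1}$; one may also quote \eref{wegner} directly from the literature on deformed Wigner and band matrices.

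Granting \eref{wegner}, the resolvent bound \eref{inversefrobenius} follows by a dyadic decomposition over the small eigenvalues. Let $\lambda_1, \dots, \lambda_M$ be the eigenvalues of $G+H$ and set $\delta = \min_j |\lambda_j|$, so that $\|(G+H)^{-1}\|_F^2 = \sum_j \lambda_j^{-2}$. On the one hand, Markov's inequality and \eref{wegner} give $\P(\delta \leq 2/t) \leq \E\tr\id_{[-2/t,\, 2/t]}(G+H) \leq CM/t$. On the other hand, on the event $\{\delta > 2/t\}$ every eigenvalue exceeds $2/t$, so if in addition $\sum_j \lambda_j^{-2} \geq t^2$ then, distributing the eigenvalues other than one achieving the minimum into the dyadic shells $(2^{k+1}/t,\, 2^{k+2}/t]$, $k \geq 0$, and writing $n_k$ for the number landing in the $k$-th shell, we have $3t^2/4 \leq \sum_j \lambda_j^{-2} - \delta^{-2} \leq (t^2/4)\sum_{k \geq 0} 4^{-k} n_k$, hence $\sum_{k \geq 0} 4^{-k} n_k \geq 3$ and therefore $n_k \geq c\, 4^k (k+1)^{-2}$ for some $k$. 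Since $n_k \leq \tr\id_{[-2^{k+2}/t,\, 2^{k+2}/t]}(G+H)$, a union bound with Markov's inequality and \eref{wegner} yields
\begin{equation*}
\P\big( \|(G+H)^{-1}\|_F \geq t,\ \delta > 2/t \big) \leq \sum_{k \geq 0} \frac{(k+1)^2}{c\, 4^k}\cdot C M \frac{2^{k+3}}{t} \leq \frac{C M}{t} \sum_{k \geq 0} \frac{(k+1)^2}{2^k} \leq \frac{C M}{t},
\end{equation*}
and adding the two bounds proves \eref{inversefrobenius}.

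The last estimate \eref{conjugatedfrobenius} is the main obstacle. The naive expansion $\|E^* H E\|_F^2 = \tr(H(EE^*) H(EE^*))$ does not suffice, since $EE^*$ is a square Wishart matrix whose smallest eigenvalue may be of order $M^{-1}$, so for an adversarial $H$ the right side can be as small as $M^{-2}\|H\|_F^2$. Instead I would argue by concentration; by homogeneity, assume $\|H\|_F = 1$. On the event $\{\|E\| \leq C\}$, which by \eref{operator} has probability at least $1 - e^{-cM}$, the function $E \mapsto \|E^* H E\|_F$ is $C$-Lipschitz for the Frobenius metric, since $\big| \|E_1^* H E_1\|_F - \|E_2^* H E_2\|_F \big| \leq \|H\|(\|E_1\| + \|E_2\|)\|E_1 - E_2\|_F$ and $\|H\| \leq \|H\|_F = 1$. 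A routine truncation turns this into a globally $C$-Lipschitz function, so Gaussian concentration gives both a variance bound $\leq C M^{-1}$ and a subgaussian tail of width $M^{-1/2}$ around the mean for $\|E^* H E\|_F$. On the other hand, using the moment identity $\E[(EE^*)_{ab}(EE^*)_{cd}] = \delta_{ab}\delta_{cd} + M^{-1}(\delta_{ac}\delta_{bd} + \delta_{ad}\delta_{bc})$, a Wick computation gives
\begin{equation*}
\E\|E^* H E\|_F^2 = \E\tr(H(EE^*) H(EE^*)) = (1 + M^{-1})\|H\|_F^2 + M^{-1}(\tr H)^2 \geq \|H\|_F^2,
\end{equation*}
so the variance bound forces $\E\|E^* H E\|_F \geq (1 - C M^{-1})^{1/2} \geq c$ once $M \geq C$; combining this with the concentration estimate gives $\P(\|E^* H E\|_F \leq c) \leq e^{-cM}$, which is \eref{conjugatedfrobenius}. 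The genuinely delicate inputs are the bounded density of states underlying \eref{wegner} and making the truncation-plus-concentration argument for \eref{conjugatedfrobenius} precise; the remaining steps are routine.
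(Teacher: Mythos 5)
Your proposal is essentially correct, but for the two substantive items it takes a genuinely different route from the paper. The paper simply quotes both \eref{wegner} and \eref{inversefrobenius} from the main theorem of Aizenman, Peled, Schenker, Shamis, and Sodin, proves \eref{dotproduct} by the same entrywise computation you give, cites Tao for \eref{operator}, and then gives an ad hoc proof of \eref{conjugatedfrobenius}: by orthogonal invariance reduce to diagonal $H$, compute the second and fourth moments of an off-diagonal entry $(E^*HE)_{ij}$ to get $\P((E^*HE)_{ij}^2 \geq c M^{-2}\|H\|_F^2) \geq c$, and then exploit that entries indexed by disjoint pairs $\{i,j\}$ are independent, partitioning the off-diagonal entries into at most $CM$ families of at least $cM$ independent ones to get the $e^{-cM}$ bound. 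Your alternative for \eref{conjugatedfrobenius} --- local Lipschitz bound on $\{\|E\|\leq C\}$, Lipschitz extension/truncation, Gaussian concentration at scale $M^{-1/2}$, and the Wick identity $\E\|E^*HE\|_F^2 = (1+M^{-1})\|H\|_F^2 + M^{-1}(\tr H)^2 \geq \|H\|_F^2$ --- is a clean and valid substitute, with the routine caveat that the second-moment lower bound is for the untruncated function while the variance bound is for the extension, so you must restrict the second moment to the good event (using $\|E^*HE\|_F \leq \|E\|^2\|H\|_F$ and the exponentially small bad probability) before comparing; your derivation of \eref{inversefrobenius} from \eref{wegner} by dyadic shells is also correct and replaces a citation by an argument. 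The one weak spot is \eref{wegner} itself: the free-convolution heuristic (bounded density of $\mu\boxplus$ semicircle ``up to a concentration error'') does not by itself yield $\E\tr\id_I(G+H) \leq CM|I|$ at scales $|I| \ll M^{-1}$, which is precisely the nontrivial ``matrix regularizing'' content of the cited reference; since you explicitly allow quoting it from the literature, as the paper does, this is acceptable, but it should be quoted rather than claimed to follow from the density-of-states heuristic. What your approach buys is independence from the somewhat specialized partition-into-independent-entries trick and a statement of \eref{inversefrobenius} as a consequence of \eref{wegner}; what the paper's approach buys is brevity and avoidance of any Lipschitz-extension bookkeeping.
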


\begin{proof}
The estimates \eref{wegner} and \eref{inversefrobenius} are part of the main theorem in Aizenman, Peled, Schenker, Shamis, and Sodin \cite{Aizenman-Peled-Schenker-Shamis-Sodin}.  The estimate \eref{dotproduct} follows from $\tr G H = \sum_{ij} \sqrt{2} E_{ij} H_{ij}$ and the fact that the $E_{ij}$ are independent $N(0,M^{-1})$ Gaussians.  The estimate \eref{operator} is a standard operator norm bound for Wigner matrices and can be found in Tao \cite{Tao}.  The final estimate \eref{conjugatedfrobenius} is somewhat non-standard, so we give an ad hoc proof.  Since the law of $E$ is invariant under left multiplication by an orthogonal matrix, we may assume $H$ is diagonal and therefore $(E^* H E)_{ij} = \sum_k E_{ki} H_{kk} E_{kj}$.  For $i \neq j$, compute
\begin{equation*}
\E (E^* H E)_{ij}^2 = M^{-2} \| H \|_F^2
\end{equation*}
and
\begin{equation*}
\E (E^* H E)_{ij}^4 = 3 M^{-4} \| H \|_F^4.
\end{equation*}
It follows that
\begin{equation*}
\P ( (E^* H E)_{ij}^2 \geq c M^{-2} \| H \|_F^2 ) \geq c.
\end{equation*}
Since $(E^* H E)_{i_1 j_1}, ..., (E^* H E)_{i_m j_m}$ are independent whenever $\{ i_1, j_1 \}, ..., \{ i_m, j_m \}$ are disjoint, we can partition the entries $\{ (E^* H E)_{ij}: i \neq j \}$ into at most $C M$ sets of at least $c M$ independent entries.  In particular, the event $ \| E^* H E \|_F \leq c \| H \|_F$ is contained in a union of at most $C M$ events of probability at most $e^{-c M}$.
\end{proof}

The diagonal and off-diagonal blocks $A_{k,k}$ and $A_{k,k+1}$ have, respectively, the same law as $G$ and $E$ from \lref{randomfullmatrices}.  We can therefore use the random full matrix bounds in \lref{randomfullmatrices} to control the typical sizes of the matrices appearing in the Markov chain \eref{markov}:

\begin{lemma}
\label{l.matrixbounds}
If $\ep > 0$, $|\lambda| < \ep^{-1}$, $M \geq C_\ep$, and $1 \leq k < N$, then, with probability at least $1 - M^{-c_\ep}$,
\begin{align*}
& M^{1/2-\ep} \leq \| A_{k+1,k+1} \|_F \leq M^{1/2+\ep}, \\
& M^{1/2-\ep} \leq \| B_k^* D_k^{-1} B_k \|_F \leq M^{1 + \ep}, \\
& |\tr A_{k+1,k+1} (\lambda + B_k^* D_k^{-1} B_k)| \leq M^{1/2+\ep}, \\
\mbox{and} \quad & |\tr A_{k+1,k+1} B_k^* D_k^{-1} B_k| \leq M^{1/2+\ep}.
\end{align*}
\end{lemma}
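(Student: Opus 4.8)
The plan is to establish the four bounds one at a time, in each case pairing a full matrix estimate from \lref{randomfullmatrices} with the independence structure of the chain \eref{markov}. Concretely, $D_k$ is a function of $A_{1,1},\dots,A_{k,k}$ and $A_{1,2},\dots,A_{k-1,k}$ only, so it is independent of $B_k = -A_{k,k+1}$ and of $A_{k+1,k+1}$, and these two blocks are independent of each other; moreover $D_k = A_{k,k} + H_k$ with $H_k := -\lambda\id - B_{k-1}^* D_{k-1}^{-1} B_{k-1}$ symmetric and independent of $A_{k,k} \sim G$, so \eref{wegner} and \eref{inversefrobenius} apply to $D_k$ after conditioning on $H_k$. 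In each step I would condition on the appropriate blocks, apply a conditional full matrix bound, integrate, and at the end take a union bound over four events and rescale $\ep$.

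The bound on $\|A_{k+1,k+1}\|_F$ is pure second moment: since $A_{k+1,k+1}\sim G$ one computes $\E\|A_{k+1,k+1}\|_F^2 = M+1$ and $\V\|A_{k+1,k+1}\|_F^2 \le C$, so Chebyshev gives $M^{1/2-\ep}\le\|A_{k+1,k+1}\|_F\le M^{1/2+\ep}$ off an event of probability $\le CM^{-2}$ once $M\ge C_\ep$ (the upper bound also follows from $\|A_{k+1,k+1}\|_F\le\sqrt M\,\|A_{k+1,k+1}\|$ and \eref{operator}). For the upper bound on $\|B_k^* D_k^{-1} B_k\|_F$ I would use $\|B_k^* D_k^{-1} B_k\|_F\le\|B_k\|^2\|D_k^{-1}\|_F$ together with \eref{operator} for $B_k\sim E$ and \eref{inversefrobenius} applied to $D_k$ conditionally on $H_k$, giving $\|B_k^* D_k^{-1} B_k\|_F\le CM^{1+\ep/2}\le M^{1+\ep}$. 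For the two trace bounds, the matrices $\lambda\id + B_k^* D_k^{-1} B_k$ and $B_k^* D_k^{-1} B_k$ are symmetric, independent of $A_{k+1,k+1}\sim G$, and of Frobenius norm at most $M^{1+\ep/2}$ off a small event by the bound just proved; conditioning on them and applying \eref{dotproduct} gives that the conditional second moment of the trace in question is $2M^{-1}\|\cdot\|_F^2\le 2M^{1+\ep/2}$, so Chebyshev at level $M^{1/2+\ep}$ fails with probability $\le CM^{-\ep/2}$.

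The substantive point is the lower bound $\|B_k^* D_k^{-1} B_k\|_F\ge M^{1/2-\ep}$. Applying \eref{conjugatedfrobenius} conditionally on $D_k$ — under which $D_k^{-1}$ is a deterministic symmetric matrix and $B_k$ is an independent copy of $E$ — reduces this to showing $\|D_k^{-1}\|_F\ge M^{1/2-c\ep}$ with high probability. The obvious estimate $\|D_k^{-1}\|_F^2\ge M/\|D_k\|^2$ is useless here, because $\|D_k\|$ can be a positive power of $M$. Instead the plan is to show that, although $D_k = A_{k,k} - \lambda\id - B_{k-1}^* D_{k-1}^{-1} B_{k-1}$ may carry a few large eigenvalues, at least $M/2$ of its eigenvalues $\mu_i(D_k)$ satisfy $|\mu_i(D_k)|\le M^{c\ep}$, whence $\|D_k^{-1}\|_F^2 = \sum_i|\mu_i(D_k)|^{-2}\ge (M/2)M^{-2c\ep}$. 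To bound the number of large eigenvalues of $B_{k-1}^* D_{k-1}^{-1} B_{k-1}$ I would use the elementary rank inequality
\begin{equation*}
\#\{i : |\mu_i(B_{k-1}^* D_{k-1}^{-1} B_{k-1})| > \|B_{k-1}\|^2 s\} \le \#\{i : |\mu_i(D_{k-1})| < 1/s\},
\end{equation*}
obtained by splitting $D_{k-1}^{-1}$ into its operator-norm-$\le s$ part and a complementary part of rank $\#\{i : |\mu_i(D_{k-1})| < 1/s\}$ and applying Weyl's inequality for singular values of a sum. Taking $s = cM^\ep$, using \eref{operator} for $\|B_{k-1}\|\le C$ and the Wegner estimate \eref{wegner} for $D_{k-1}$ (conditionally on $H_{k-1}$), the right side has expectation $\le CM^{1-\ep}$; so by Markov at most $M^{1-\ep/2}$ eigenvalues of $B_{k-1}^* D_{k-1}^{-1} B_{k-1}$ exceed $M^\ep$ off an event of probability $\le CM^{-c_\ep}$, and adding the operator-norm-$O(1)$ term $A_{k,k} - \lambda\id$ (bounded via \eref{operator}) changes this count and the threshold only by harmless factors via Weyl's inequality, producing the asserted $M/2$ eigenvalues of $D_k$ of size $\le 3M^\ep$. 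The case $k = 1$ is immediate since then $D_1 = A_{1,1} - \lambda\id$ has operator norm $O(1)$.

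I expect this last step — the lower bound on $\|D_k^{-1}\|_F$, and in particular the rank/Weyl argument controlling how many eigenvalues of $B_{k-1}^* D_{k-1}^{-1} B_{k-1}$ can be large — to be the only real obstacle. The remaining three bounds are routine second moment estimates together with careful bookkeeping of the conditioning and of the powers of $\ep$.
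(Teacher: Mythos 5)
Your proposal is correct and follows essentially the same route as the paper: the same full-matrix inputs of \lref{randomfullmatrices} with the same conditioning on $D_k - A_{k,k}$, and for the key lower bound the same chain — Wegner count \eref{wegner} of small eigenvalues of $D_{k-1}$, the rank/Weyl argument to control how many eigenvalues of $B_{k-1}^* D_{k-1}^{-1} B_{k-1}$ are large (which the paper leaves implicit), hence $\| D_k^{-1} \|_F \geq M^{1/2-\ep}$, and finally the conjugated Frobenius bound \eref{conjugatedfrobenius}. The only cosmetic difference is your Chebyshev/variance argument for the lower bound on $\| A_{k+1,k+1} \|_F$, where the paper instead uses \eref{wegner}; both work.
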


\begin{proof}
We prove a series of estimates valid for all $\ep > 0$, $|\lambda| < \ep^{-1}$, $M \geq C_\ep$, and $1 \leq k < N$.  For brevity, we omit these quantifiers from the notation.  However, it is important to note that an estimate for some $\ep > 0$ may follow from a previous estimate for a different $\ep > 0$.  The estimate
\begin{equation*}
\P(\| A_{k+1,k+1} \|_F \geq M^{1/2+\ep}) \leq M^{-c_\ep}
\end{equation*}
follows from Markov's inequality and $\E \| A_{k+1,k+1} \|_F^2 = M+1$.  The estimate
\begin{equation*}
\P ( \tr \id_{[-M^{-\ep},M^{-\ep}]}(A_{k+1,k+1}) \geq \ep M ) \leq M^{-c_\ep}
\end{equation*}
follows from the Wegner estimate \eref{wegner}.  The estimate
\begin{equation*}
\P(\| A_{k+1,k+1} \|_F \leq M^{1/2-\ep}) \leq M^{-c_\ep}
\end{equation*}
follows from the previous estimate and the fact that $A_{k+1,k+1}$ has $M$ eigenvalues.  The estimate
\begin{equation*}
\P(\| D_k^{-1} \|_F \geq M^{1+\ep}) \leq M^{-c_\ep}
\end{equation*}
follows from the independence of $D_k - A_{k,k}$ and $A_{k,k}$ and the inverse Frobenius norm bound \eref{inversefrobenius}.  The estimate
\begin{equation*}
\P(\| B_k^* D_k^{-1} B_k \|_F \geq M^{1+\ep}) \leq M^{-c_\ep}
\end{equation*}
follows from the previous estimate and the operator norm bound \eref{operator} applied to $B_k$.  The estimate
\begin{equation*}
\P ( \tr \id_{[-M^{-\ep},M^{-\ep}]}(D_k) \geq \ep M ) \leq M^{-c_\ep}
\end{equation*}
follows from the independence of $D_k - A_{k,k}$ and $A_{k,k}$ and the Wegner estimate \eref{wegner}.  The estimate
\begin{equation*}
\P(\tr \id_{[-M^\ep,M^\ep]} (B_k^* D_k^{-1} B_k ) \leq (1 - \ep) M) \leq M^{-c_\ep}
\end{equation*}
follows from the previous estimate and the operator norm bound \eref{operator} applied to $B_k$.  The estimate
\begin{equation*}
\P(\tr \id_{[-M^\ep,M^\ep]} (D_k ) \leq (1 - \ep) M) \leq M^{-c_\ep}
\end{equation*}
follows from the previous estimate, $|\lambda| < \ep^{-1}$, and the operator norm bound \eref{operator} applied to $A_{k,k}$.  The estimate
\begin{equation*}
\P(\| D_k^{-1} \|_F \leq M^{1/2-\ep}) \leq M^{-c_\ep}
\end{equation*}
follows from the previous estimate and the fact that $\| D_k^{-1} \|_F^2$ is the sum of the squares of the eigenvalues of $D_k^{-1}$.  The estimate
\begin{equation*}
\P(\| B_k^* D_k^{-1} B_k \|_F \leq M^{1/2-\ep}) \leq M^{-c_\ep}
\end{equation*}
follows from the previous estimate, the independence of $B_k$ and $D_k$, and the conjugated Frobenius norm bound \eref{conjugatedfrobenius}.  Finally, the last two estimates in the statement of the lemma follow from the Frobenius inner product bound \eref{dotproduct}, Markov's inequality, and the independence of $A_{k+1,k+1}$ and $B_k^* D_k^{-1} B_k$.
\end{proof}

\section{Fluctuation lower bound}

For an arbitrary random vector whose law has a continuous density, we recall the conditional law of its norm given its direction:

\begin{lemma}
\label{l.normdensity}
If the random vector $X \in \R^n$ has continuous density $\phi$, $Y = \| X \|$, $\bar X = \| X \|^{-1} X$, and $f \in C_c((0,\infty))$, then
\begin{equation*}
\E( f(Y) | \bar X) = Z^{-1} \int_0^\infty  f(y) y^{n-1} \phi(y \bar X) \,dy,
\end{equation*}
where $Z = \int_0^\infty y^{n-1} \phi(y \bar X) \,dy.$ \qed
\end{lemma}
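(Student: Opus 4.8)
The plan is to verify the defining property of conditional expectation directly, by disintegrating the law of $X$ in polar coordinates. Let $S^{n-1} \subset \R^n$ denote the unit sphere with surface measure $\sigma$, and for $\omega \in S^{n-1}$ write $Z(\omega) = \int_0^\infty r^{n-1} \phi(r\omega)\,dr$, so that the $Z$ in the statement is $Z(\bar X)$ and the asserted identity reads $\E(f(Y) \mid \bar X) = Z(\bar X)^{-1} \int_0^\infty f(y) y^{n-1} \phi(y \bar X)\,dy$. Since the right-hand side is a measurable function of $\bar X$, it is enough to check that
\[
\E\big(f(Y)\, g(\bar X)\big) = \E\Big(g(\bar X)\, Z(\bar X)^{-1} \int_0^\infty f(y) y^{n-1} \phi(y \bar X)\,dy\Big)
\]
for every bounded measurable $g : S^{n-1} \to \R$.

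First I would identify the law of $\bar X$. Applying the change of variables $x = r\omega$, with $dx = r^{n-1}\,dr\,d\sigma(\omega)$, gives $\E\, h(\bar X) = \int_{\R^n} h(\|x\|^{-1}x)\phi(x)\,dx = \int_{S^{n-1}} h(\omega) Z(\omega)\,d\sigma(\omega)$ for every bounded measurable $h$ on $S^{n-1}$; that is, $\bar X$ has law $Z(\omega)\,d\sigma(\omega)$ on $S^{n-1}$. In particular $Z(\omega) < \infty$ for $\sigma$-a.e.\ $\omega$, since $\int_{S^{n-1}} Z\,d\sigma = 1$, and $Z(\bar X) > 0$ almost surely, so dividing by $Z(\bar X)$ is legitimate.

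Next I would expand both sides of the displayed identity in polar coordinates. The left side equals $\int_{\R^n} f(\|x\|) g(\|x\|^{-1}x)\phi(x)\,dx = \int_{S^{n-1}} g(\omega) \big(\int_0^\infty f(r) r^{n-1} \phi(r\omega)\,dr\big)\,d\sigma(\omega)$, while the right side, evaluated against the law of $\bar X$ just found, equals $\int_{S^{n-1}} g(\omega)\, Z(\omega)^{-1}\big(\int_0^\infty f(y) y^{n-1}\phi(y\omega)\,dy\big)\, Z(\omega)\,d\sigma(\omega)$; the two factors of $Z(\omega)$ cancel and the expressions coincide, which is what we wanted. The only technical point is the use of Fubini--Tonelli in these iterated integrals: since $f \in C_c((0,\infty))$ is bounded with support in a compact subset of $(0,\infty)$ and $g$ is bounded, the integrand is dominated by a constant multiple of $r^{n-1}\phi(r\omega)$ over a bounded range of $r$, whose integral over $(0,\infty) \times S^{n-1}$ is at most $\|f\|_\infty \|g\|_\infty \int_{\R^n}\phi = \|f\|_\infty\|g\|_\infty < \infty$. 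I do not anticipate a genuine obstacle here: this is a routine polar-coordinate disintegration, and the hypotheses that $\phi$ is continuous and that $f$ has compact support in $(0,\infty)$ are present precisely to guarantee that the change of variables is valid and that every integral in sight is finite.
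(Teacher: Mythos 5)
Your argument is correct: verifying the defining property of conditional expectation via the polar-coordinate disintegration $dx = r^{n-1}\,dr\,d\sigma(\omega)$, identifying the law of $\bar X$ as $Z(\omega)\,d\sigma(\omega)$, and letting the factors of $Z$ cancel is exactly the standard reasoning behind this lemma. The paper in fact omits the proof entirely (the statement ends with \qed, treating it as routine), and your write-up, including the remarks that $Z(\bar X)>0$ almost surely and that Fubini--Tonelli applies because $f$ is bounded with compact support in $(0,\infty)$, supplies precisely the details being taken for granted.
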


The conditional law of $S_k$ can now be computed from the density \eref{hamiltonian} of the random band matrix.

\begin{lemma}
\label{l.conditionallaw}
For $1 < k < N$ and $f \in C_c((0,\infty))$,
\begin{equation*}
\E( f(S_k) | D_{k-1}, \bar D_k, D_{k+1}, B_{k-1}, B_k) = Z_k^{-1} \int_0^\infty f(s) e^{-\phi_k(s)}  \,ds,
\end{equation*}
where
\begin{equation}
\begin{aligned}
\label{e.conditionallaw}
\phi_k(s S_k)
& = \frac{M}{4} \| s D_k + \lambda + B_{k-1}^* D_{k-1}^{-1} B_{k-1}  \|_F^2 \\
& \quad + \frac{M}{4} \| D_{k+1} + \lambda + s^{-1} B_k^* D_k^{-1} B_k \|_F^2 \\
& \quad - \frac{M^2+M-2}{2} \log (s S_k)
\end{aligned}
\end{equation}
and $Z_k = \int_0^\infty e^{-\phi_k(s)} \,ds.$
\end{lemma}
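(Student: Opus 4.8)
The plan is to disintegrate the Gaussian law \eref{hamiltonian} in three stages and then apply \lref{normdensity}. First I would record the independence structure. Since $A$ is symmetric and block tridiagonal, $\tr a^* a = \sum_j \| a_{j,j} \|_F^2 + 2 \sum_j \| a_{j,j+1} \|_F^2$, so under \eref{hamiltonian} the blocks $A_{1,1}, \dots, A_{N,N}, A_{1,2}, \dots, A_{N-1,N}$ are mutually independent, each $A_{j,j}$ having the density $a \mapsto Z^{-1} e^{-\frac M4 \| a \|_F^2}$ on the space of real symmetric $M \times M$ matrices. By the recursion \eref{markov}, the triple $(D_{k-1}, B_{k-1}, B_k)$ is a measurable function of $A_{1,1}, \dots, A_{k-1,k-1}$ and $A_{1,2}, \dots, A_{k,k+1}$, hence independent of $(A_{k,k}, A_{k+1,k+1})$. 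Writing $\mathcal G = \sigma(D_{k-1}, B_{k-1}, B_k)$, it follows that under the conditional law given $\mathcal G$ the matrices $B_k$ and $\lambda + B_{k-1}^* D_{k-1}^{-1} B_{k-1}$ are deterministic, $B_k$ is invertible almost surely, and $(A_{k,k}, A_{k+1,k+1})$ remains a pair of independent symmetric Gaussians with the above density.

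Second I would change variables. Fixing a realization of $\mathcal G$, the map $(A_{k,k}, A_{k+1,k+1}) \mapsto (D_k, D_{k+1})$ defined by \eref{markov} is a diffeomorphism off a null set; it is block lower triangular — the first coordinate is a translate of $A_{k,k}$, the second is $A_{k+1,k+1}$ translated by a function of $D_k$ — with identity diagonal blocks, so its Jacobian is $1$. Hence the conditional density of $(D_k, D_{k+1})$ given $\mathcal G$ is, on pairs of symmetric matrices, proportional to $e^{-\frac M4 \| A_{k,k} \|_F^2 - \frac M4 \| A_{k+1,k+1} \|_F^2}$ with $A_{k,k}$ and $A_{k+1,k+1}$ expressed through \eref{markov}. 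Disintegrating once more in $D_{k+1}$, the conditional density $\phi$ of $D_k$ given $\sigma(\mathcal G, D_{k+1})$ is proportional in $D_k$ to the same expression; since $B_k$ is invertible, $\| B_k^* D_k^{-1} B_k \|_F \to \infty$ as $D_k$ approaches the singular locus, so $\phi$ extends continuously by $0$ and is a genuine continuous probability density on the symmetric matrices, a space of dimension $n = M(M+1)/2$, so that $n - 1 = (M^2 + M - 2)/2$.

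Finally I would invoke \lref{normdensity} conditionally on $\sigma(\mathcal G, D_{k+1})$, with $X = D_k$ and conditional density $\phi$; its proof goes through for the operator norm $\| \cdot \|$ because Lebesgue measure still decomposes as $s^{n-1}\, ds$ times a measure on the unit sphere $\{ \| \bar D \| = 1 \}$. Since $\sigma(D_{k-1}, \bar D_k, D_{k+1}, B_{k-1}, B_k) = \sigma(\mathcal G, D_{k+1}, \bar D_k)$, this yields, on the almost sure event that $B_k$ is invertible,
\begin{equation*}
\E( f(S_k) \mid D_{k-1}, \bar D_k, D_{k+1}, B_{k-1}, B_k ) = Z_k^{-1} \int_0^\infty f(s)\, s^{n-1} \phi(s \bar D_k)\, ds .
\end{equation*}
Folding $s^{n-1} = e^{(n-1)\log s}$ into the exponent, using $(s \bar D_k)^{-1} = s^{-1} \bar D_k^{-1}$, absorbing the $s$-independent normalization into $Z_k$, and substituting $D_k = S_k \bar D_k$ in the defining relation \eref{conditionallaw}, I would identify the integrand with $f(s) e^{-\phi_k(s)}$ and note that $0 < Z_k < \infty$ almost surely; this is the claimed formula. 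The computation is routine once \lref{normdensity} is available; the points that genuinely need care — and the likeliest sources of an error — are the disintegration bookkeeping, in particular that $\bar D_k$ carries randomness beyond $\mathcal G$ and $D_{k+1}$ (exactly what the conditional form of \lref{normdensity} accounts for), the triangularity and resulting unit Jacobian of the substitution \eref{markov}, and the continuity of $\phi$ up to the singular locus, which both licenses \lref{normdensity} and keeps $Z_k$ finite.
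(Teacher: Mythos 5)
Your proposal is correct and takes essentially the same route as the paper: the unit-Jacobian (triangular) change of variables induced by \eref{markov} to obtain the conditional density of $D_k$ given $(D_{k-1},D_{k+1},B_{k-1},B_k)$, followed by \lref{normdensity} on the $M(M+1)/2$-dimensional space of symmetric matrices. The only differences are organizational (you condition on the earlier blocks and transform just $(A_{k,k},A_{k+1,k+1})$, while the paper transforms $A\mapsto(D,B)$ globally) and your added care about the operator-norm version of \lref{normdensity} and continuity of the density at the singular locus, points the paper leaves implicit.
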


\begin{proof}
Since the law of $A$ has density \eref{hamiltonian} and the change of variables $A \mapsto (D,B)$ in \eref{markov} preserves Lebesgue measure, the law of $(D,B)$ has density
\begin{equation*}
(d,b) \mapsto Z_{N,M}^{-1} e^{- \frac{M}{4} \| d_1 \|_F^2 - \frac{M}{4} \sum_k \| d_{k+1} + \lambda + b_k^* d_k^{-1} b_k \|_F^2 - \frac{M}{2} \sum_k \| b_k \|_F^2}
\end{equation*}
on the space of pairs of sequences of matrices $(d,b) \in (\R^{M \times M})^N \times (\R^{M \times M})^{N-1}$ that satisfy $d_k^* = d_k$.  The conditional law of $D_k$ given $(D_{k-1}, D_{k+1}, B_{k-1}, B_k)$ therefore has density
\begin{equation*}
d \mapsto Z^{-1} e^{- \frac{M}{4} \| d + \lambda + B_{k-1}^* D_{k-1}^{-1} B_{k-1} \|_F^2 - \frac{M}{4} \| D_{k+1} + \lambda + B_k^* d^{-1} B_k \|_F^2}
\end{equation*}
where $Z > 0$ is $(D_{k-1}, D_{k+1}, B_{k-1}, B_k)$-measurable.  Conclude using \lref{normdensity} together with the fact that the space of $d \in \R^{M \times M}$ with $d^* = d$ has dimension $M(M+1)/2$.
\end{proof}

We estimate the typical growth of the logarithmic density $\phi_k$.

\begin{lemma}
\label{l.uniformlylogconcave}
If $\ep > 0$, $|\lambda| < \ep^{-1}$, $M \geq C_\ep$, and $1 < k < N$, then, with probability at least $1 - M^{-c_\ep}$, the logarithmic density $\phi_k$ defined in \eref{conditionallaw} satisfies
\begin{equation*}
S_k \phi_k'(S_k s) \geq M^{2-\ep} s \quad \mbox{for } s \geq M^\ep,
\end{equation*}
\begin{equation*}
S_k \phi_k'(S_k s) \leq - M^{2-\ep} s^{-3} \quad \mbox{for } 0 < s \leq M^{-\ep},
\end{equation*}
and
\begin{equation*}
|S_k^2 \phi_k''(S_k s)| \leq M^{3+\ep} (1 + s^{-4}) \quad \mbox{for } s > 0.
\end{equation*}
\end{lemma}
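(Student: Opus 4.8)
The plan is to compute $\psi(s) := \phi_k(s S_k)$ explicitly, rewrite it using the Markov chain relations so that the potentially large matrices enter with a favorable sign, and then estimate term by term on the high-probability event provided by \lref{matrixbounds}. Set $P := \lambda + B_{k-1}^* D_{k-1}^{-1} B_{k-1}$ and $Q := B_k^* D_k^{-1} B_k$; then \lref{conditionallaw} gives
\begin{equation*}
\psi(s) = \frac{M}{4} \| s D_k + P \|_F^2 + \frac{M}{4} \| D_{k+1} + \lambda + s^{-1} Q \|_F^2 - \frac{M^2+M-2}{2} \log(s S_k),
\end{equation*}
and the asserted inequalities are precisely inequalities for $\psi'(s) = S_k \phi_k'(s S_k)$ and $\psi''(s) = S_k^2 \phi_k''(s S_k)$. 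The key point is that \eref{markov} gives the identities $D_k + P = A_{k,k}$ and $D_{k+1} + \lambda + Q = A_{k+1,k+1}$, so $s D_k + P = s A_{k,k} + (1-s) P$ and $D_{k+1} + \lambda + s^{-1} Q = A_{k+1,k+1} + (s^{-1}-1) Q$. Differentiating the resulting expansions of the two Frobenius norms, I would record
\begin{equation*}
\psi'(s) = \frac{M}{2} \bigl( s \|A_{k,k}\|_F^2 + (1-2s) \tr(A_{k,k} P) + (s-1) \|P\|_F^2 - s^{-2} \tr(A_{k+1,k+1} Q) - s^{-3}(1-s) \|Q\|_F^2 \bigr) - \frac{M^2+M-2}{2s},
\end{equation*}
and a further differentiation (using $\|A_{k,k}-P\|_F^2 = \|D_k\|_F^2$ and $\tr(A_{k+1,k+1}Q)-\|Q\|_F^2 = \tr((D_{k+1}+\lambda)Q)$) gives $\psi''(s) = \frac{M}{2} \|D_k\|_F^2 + M s^{-3} \tr((D_{k+1}+\lambda) Q) + \frac{3M}{2} s^{-4} \|Q\|_F^2 + \frac{M^2+M-2}{2 s^2}$.

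Next I would fix the event. Applying \lref{matrixbounds} at indices $k-1$ and $k$ with $\ep$ replaced by $\ep/4$, we obtain, off a set of probability at most $M^{-c_\ep}$, the bounds $M^{1/2-\ep/4} \le \|A_{k,k}\|_F, \|A_{k+1,k+1}\|_F \le M^{1/2+\ep/4}$, $M^{1/2-\ep/4} \le \|Q\|_F \le M^{1+\ep/4}$, $|\tr(A_{k,k} P)| \le M^{1/2+\ep/4}$, $|\tr(A_{k+1,k+1} Q)| \le M^{1/2+\ep/4}$, and, using $|\lambda| < \ep^{-1}$, also $\|P\|_F \le 2 M^{1+\ep/4}$ and $\|D_k\|_F \le 3 M^{1+\ep/4}$. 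On this event the three estimates are immediate from the displayed formulas. For $s \ge M^\ep$ one has $s > 1$, so $(s-1)\|P\|_F^2 \ge 0$ and $-s^{-3}(1-s)\|Q\|_F^2 \ge 0$; discarding these, bounding the two trace terms by $C s M^{1/2+\ep/4}$ in absolute value, and using $\|A_{k,k}\|_F^2 \ge M^{1-\ep/2}$, one finds $\psi'(s) \ge \frac{1}{4} s M^{2-\ep/2} \ge M^{2-\ep} s$ for $M$ large, because $\frac{1}{4} s M^{2-\ep/2} \ge \frac{1}{4} M^{2+\ep/2}$ dominates both the trace error and $\frac{M^2+M-2}{2s} \le \frac{1}{2} M^{2-\ep}$. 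For $0 < s \le M^{-\ep}$ one has $s < 1/2$, so $(s-1)\|P\|_F^2 \le 0$ and the term $-\frac{M}{2}s^{-3}(1-s)\|Q\|_F^2$ is at most $-\frac{1}{4} s^{-3} M^{2-\ep/2}$; the first three terms contribute at most $M^{2-\ep/2}$, and $-\frac{1}{4} s^{-3} M^{2-\ep/2}$ dominates $M^{2-\ep/2}$, the trace error $C s^{-2} M^{3/2+\ep/4}$, and the target $M^{2-\ep} s^{-3}$ (the last since $M^{2-\ep/2} \gg M^{2-\ep}$ and $s^{-3} \ge s^{-2}$), giving $\psi'(s) \le -M^{2-\ep} s^{-3}$. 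Finally, for the second-derivative bound one estimates each of the four terms of $\psi''(s)$ in absolute value using $\|D_k\|_F^2, \|Q\|_F^2, |\tr((D_{k+1}+\lambda) Q)| \le C M^{2+\ep/2}$ and the elementary inequalities $s^{-2}, s^{-3} \le 1 + s^{-4}$, which yields $|\psi''(s)| \le C M^{3+\ep/2} (1 + s^{-4}) \le M^{3+\ep} (1 + s^{-4})$.

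The step I expect to carry the real content is the rewriting in the first paragraph. Without the identities $D_k + P = A_{k,k}$ and $D_{k+1}+\lambda+Q = A_{k+1,k+1}$, the only bounds available for $\|D_k\|_F$ and $\|Q\|_F$ are of size $M^{1+\ep}$, which would make the leading terms $\frac{M}{2} s \|D_k\|_F^2$ and $\frac{M}{2} s^{-3} \|Q\|_F^2$ of order $M^3$ precisely in the regimes where control is needed, far above the target $M^{2-\ep}$. The substitutions expose that these dangerous quantities occur with coefficients $(s-1)$ and $-(1-s)$ of the helpful sign, leaving only the genuinely $O(M^{1/2})$-sized diagonal blocks $A_{k,k}$ and $A_{k+1,k+1}$ contributing at leading order. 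A secondary point requiring care is the exponent bookkeeping: one must invoke \lref{matrixbounds} with a reduced $\ep$, so that the true lower bounds $\|A_{k,k}\|_F^2 \ge M^{1-\ep/2}$ and $\|Q\|_F^2 \ge M^{1-\ep/2}$ beat the target exponent $M^{2-\ep}$ with enough margin to absorb both the logarithmic term and the cross-term errors.
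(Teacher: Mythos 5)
Your proposal is correct and follows essentially the same route as the paper: both use the recursion \eref{markov} to rewrite $sD_k+\lambda+B_{k-1}^*D_{k-1}^{-1}B_{k-1}=sA_{k,k}+(1-s)(\lambda+B_{k-1}^*D_{k-1}^{-1}B_{k-1})$ and $D_{k+1}+\lambda+s^{-1}B_k^*D_k^{-1}B_k=A_{k+1,k+1}+(s^{-1}-1)B_k^*D_k^{-1}B_k$, then invoke \lref{matrixbounds} (with a reduced $\ep$) and estimate the resulting derivative expressions term by term, your explicit $P,Q$ formulas being just a repackaging of the paper's $\alpha_1,\dots,\alpha_7$ coefficients. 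The only quibbles are cosmetic: the trace-term error should carry the extra factor $M$ from the $\tfrac{M}{2}$ prefactor (it is still dominated), and one should note the harmless reduction to small $\ep$ so the exponent comparisons like $3/2+\ep/4<2-\ep/2$ hold.
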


\begin{proof}
Using the recursion \eref{markov}, compute
\begin{equation*}
s D_k + \lambda + B_{k-1}^* D_{k-1}^{-1} B_{k-1} = s A_{k,k} + (1 - s) (\lambda + B_{k-1}^* D_{k-1}^{-1} B_{k-1})
\end{equation*}
and
\begin{equation*}
D_{k+1} + \lambda + s^{-1} B_k^* D_k^{-1} B_k = A_{k+1,k+1} + (s^{-1} - 1) B_k^* D_k^{-1} B_k.
\end{equation*}
Inserting these into the definition of $\phi_k$ in \eref{conditionallaw}, obtain
\begin{equation*}
\phi_k(S_k s) = \alpha_1 s^2 + \alpha_2 (s-1)^2 - 2  \alpha_3 s (s-1) + \alpha_4 + \alpha_5 (s^{-1}-1)^2  + 2 \alpha_6 (s^{-1}-1)  - \alpha_7 \log (S_k s),
\end{equation*}
where
\begin{align*}
& \alpha_1 = \tfrac{M}{4} \| A_{k,k} \|^2_F, \\
& \alpha_2 = \tfrac{M}{4} \| \lambda + B_{k-1}^* D_{k-1}^{-1} B_{k-1} \|_F^2, \\
& \alpha_3 = \tfrac{M}{4} \tr A_{k,k} (\lambda + B_{k-1}^* D_{k-1}^{-1} B_{k-1}), \\
& \alpha_4 = \tfrac{M}{4} \| A_{k+1,k+1} \|_F^2 \\
& \alpha_5 = \tfrac{M}{4} \| B_k^* D_k^{-1} B_k \|_F^2, \\
& \alpha_6 = \tfrac{M}{4} \tr A_{k+1,k+1} B_k^* D_k^{-1} B_k, \\
\mbox{and} \quad
& \alpha_7 = \tfrac12 (M^2 + M - 2).
\end{align*}
Using \lref{matrixbounds}, the bounds
\begin{align*}
& M^{2-\ep} \leq \alpha_1 \leq M^{2+\ep}, \\
& M^{2-\ep} \leq \alpha_2 \leq M^{3+\ep}, \\
& |\alpha_3| \leq M^{3/2+\ep}, \\
& M^{2-\ep} \leq \alpha_4 \leq M^{2+\ep}, \\
& M^{2-\ep} \leq \alpha_5 \leq M^{3+\ep}, \\
& |\alpha_6| \leq M^{3/2+\ep}, \\
\mbox{and} \quad
& M^{2-\ep} \leq \alpha_7 \leq M^{2+\ep},
\end{align*}
hold with probability at least $1 - M^{-c_\ep}$.  Without loss of generality, assume these bounds hold almost surely.  Compute
\begin{equation*}
S_k \phi_k'(S_k s) = 2 \alpha_1 s + 2 \alpha_2 (s-1) - 4 \alpha_3 s + 2 \alpha_5 (s^{-2} - s^{-3}) - 2 \alpha_6 s^{-2}  - \alpha_7 s^{-1}.
\end{equation*}
and use the bounds on $\alpha_k$ to estimate
\begin{equation*}
S_k \phi_k'(S_k s) \geq M^{2-\ep} s - M^{2+\ep} \quad \mbox{for } s > 1
\end{equation*}
and
\begin{equation*}
S_k \phi_k'(S_k s) \leq M^{2+\ep} + M^{2-\ep} (s^{-2} - s^{-3}) \quad \mbox{for } 0 < s < 1.
\end{equation*}
Conclude the first and second inequalities in the lemma statement.  Compute
\begin{equation*}
S_k^2 \phi_k''(S_k s) = 2 \alpha_1 + 2 \alpha_2 - 4 \alpha_3 + 2 \alpha_5 (3 s^{-4} - 2 s^{-3}) + 4 \alpha_6 s^{-3} + \alpha_7 s^{-2}
\end{equation*}
and use the bounds on the $\alpha_k$ and $s^{-2} + s^{-3} \leq 2 + s^{-4}$ to conclude the third inequality in the lemma statement.
\end{proof}

We prove an elementary logarithmic variance bound.

\begin{lemma}
\label{l.logvariance}
If $Y$ is a positive random variable, the law of $Y$ has continuous density $e^{-\psi}$, and there are $y_0 > 0$, $\alpha \geq 1$, and $\beta \geq 1$ such that
\begin{equation*}
y_0 \psi'(y_0 y) \geq \beta y \quad \mbox{for } y \geq \beta,
\end{equation*}
\begin{equation*}
y_0 \psi'(y_0 y) \leq - \beta y^{-3} \quad \mbox{for } 0 < y \leq \beta^{-1},
\end{equation*}
and
\begin{equation*}
|y_0^2 \psi''(y_0 y)| \leq \alpha (1 + y^{-4}) \quad \mbox{for } y > 0,
\end{equation*}
then $\V \log Y \geq c \alpha^{-1} \beta^{-6}.$
\end{lemma}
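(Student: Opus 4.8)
The plan is to pass to the density of $\log Y$ and extract the variance bound from a pointwise upper bound on that density. First I would substitute $Z = \log Y - \log y_0$, so that $\V \log Y = \V Z$ and $Z$ has density $e^{-g}$ on $\R$, where $g(z) = \psi(y_0 e^z) - z - \log y_0$ (we may assume $\psi \in C^2$). Since $g'(z) = y_0 e^z \psi'(y_0 e^z) - 1$ and $g''(z) = g'(z) + 1 + (y_0 e^z)^2 \psi''(y_0 e^z)$, the three hypotheses translate respectively into $g'(z) \ge \beta e^{2z} - 1$ for $z \ge \log \beta$, into $g'(z) \le -\beta e^{-2z} - 1$ for $z \le -\log\beta$, and into $|g''(z) - g'(z) - 1| \le \alpha(e^{2z} + e^{-2z})$ for all $z$. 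As $\beta \ge 1$, the first two bounds force $g$ to be strictly decreasing on $(-\infty, -\log\beta]$ and strictly increasing on $[\log\beta, \infty)$, and they also give $g(z) \to \infty$ as $z \to \pm\infty$; hence $g$ attains its global minimum at some $z_0 \in [-\log\beta, \log\beta]$, where $g'(z_0) = 0$.

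The crux is to bound $g''$ near $z_0$. On $[z_0 - 1, z_0 + 1] \subseteq [-\log\beta - 1, \log\beta + 1]$ one has $e^{2z} + e^{-2z} \le 2e^2 \beta^2$, so $|g''| \le |g'| + 1 + 2e^2 \alpha \beta^2$ there. Starting from $g'(z_0) = 0$ and integrating, Grönwall's inequality yields $|g'(z)| \le C \alpha \beta^2$ for $|z - z_0| \le 1$, and hence $|g''(z)| \le K$ for $|z - z_0| \le 1$, with $K := C \alpha \beta^2$. I expect this to be the main obstacle: the hypotheses only control $g''$ through the a priori unbounded quantity $g'$, and it is precisely the vanishing of $g'$ at the global minimum that lets a Grönwall argument on a unit interval around $z_0$ close the loop.

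It then remains to turn the local bound on $g''$ into a global bound on the density. By Taylor's theorem and $g'(z_0) = 0$, for $|z - z_0| \le \delta := \min(1, K^{-1/2})$ one has $g(z) \le g(z_0) + \tfrac12 K \delta^2 \le g(z_0) + \tfrac12$, so $1 = \int_\R e^{-g} \ge 2\delta\, e^{-g(z_0) - 1/2}$ and therefore $\sup_z e^{-g(z)} = e^{-g(z_0)} \le \tfrac12 e^{1/2} \delta^{-1} \le C\sqrt{K} \le C \alpha^{1/2} \beta =: D$ (the case $K \le 1$ being immediate). Finally, a density bounded by $D$ gives the anti-concentration estimate $\P(|Z - \E Z| \le t) \le 2tD$ for every $t > 0$; taking $t = (4D)^{-1}$ gives $\P(|Z - \E Z| > (4D)^{-1}) \ge \tfrac12$, whence
\begin{equation*}
\V \log Y = \V Z \ge \frac{1}{32 D^2} \ge c\, \alpha^{-1} \beta^{-2} \ge c\, \alpha^{-1} \beta^{-6},
\end{equation*}
the last step using $\beta \ge 1$. (The argument in fact proves the stronger bound $\V \log Y \ge c\, \alpha^{-1} \beta^{-2}$.)
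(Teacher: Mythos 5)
Your argument is correct, and it takes a genuinely different route from the paper. The paper stays with the variable $Y$ itself: it first integrates the tail hypotheses to get $\E|\log Y| \leq \log \beta + C$, so that $y_1 = \exp \E \log Y$ lies in $[c\beta^{-1}, C\beta]$, then uses the bound on $\psi''$ to show that the mass outside the window $[y_1 - t, y_1 + t]$ is at least $e^{-C\alpha\beta^6 t^2}$ times the mass inside, and concludes by Chebyshev with $t^{-2} = \alpha\beta^6$, landing exactly on $c\,\alpha^{-1}\beta^{-6}$. You instead pass to the log-density $g$ of $Z = \log Y$, use the two first-derivative hypotheses to trap the global minimizer $z_0$ of $g$ in $[-\log\beta, \log\beta]$, and then exploit $g'(z_0) = 0$ together with a Gr\"onwall argument on $[z_0 - 1, z_0 + 1]$ to convert the $\psi''$ hypothesis (which controls $g''$ only up to $|g'|$) into the bound $|g''| \leq C\alpha\beta^2$ near the mode; Taylor expansion then caps the density of $Z$ by $C\alpha^{1/2}\beta$, and anti-concentration about the mean finishes the proof. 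What your route buys is a genuinely stronger conclusion, $\V \log Y \geq c\,\alpha^{-1}\beta^{-2}$, and a cleaner mechanism (a sup bound on the density of $\log Y$ rather than a mass-comparison inequality); what the paper's route buys is that it never needs to locate a critical point or run Gr\"onwall, and its weaker $\beta^{-6}$ dependence is harmless since the application only uses $\beta = M^{\ep}$. Two small points to make explicit in a final write-up: centering at $\E Z$ requires $\E|\log Y| < \infty$, which does follow from your tail bounds on $g'$ (the paper checks the analogous fact via $\E|\log Y| \leq \log\beta + C$), and the identity $g'(z_0) = 0$ holds even if $z_0$ sits at an endpoint of $[-\log\beta, \log\beta]$ because $z_0$ is a global minimizer over all of $\R$.
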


\begin{proof}
Since $\V \log (y_0^{-1} Y) = \V \log Y$ and $y_0^{-1} Y$ has density $y \mapsto y_0 e^{- \psi(y_0 y)}$, we can rescale to make $y_0 = 1$.  Using $\psi'(y) \geq \beta y$ for $y \geq \beta$, compute
\begin{align*}
&  \int_{2 \beta}^\infty |\log y| e^{-\psi(y)} \,dy \\
& \leq \int_{2 \beta}^\infty y e^{-\psi(2 \beta) + \frac12 \beta (2 \beta)^2 - \frac12 \beta y^2} \\
& = \beta^{-1} e^{-\psi(2 \beta)} \\
& \leq \beta^{-2} \int_\beta^{2 \beta} e^{-\psi(y)} \,dy \\
& \leq \beta^{-2}.
\end{align*}
Similarly, using $\psi'(y) \leq - \beta y^{-3}$ for $0 < y \leq \beta^{-1}$, compute
\begin{align*}
&  \int_0^{(2 \beta)^{-1}} |\log y| e^{-\psi(y)} \,dy \\
& \leq (2 \beta)^{-2} \int_0^{(2 \beta)^{-1}} y^{-3} e^{-\psi((2 \beta)^{-1}) + \frac12 \beta (2 \beta)^2 - \frac12 \beta y^{-2}} \\
& = (2 \beta)^{-2} \beta^{-1} e^{-\psi((2 \beta)^{-1})} \\
& \leq \frac12 \beta^{-2} \int_{(2 \beta)^{-1}}^{\beta^{-1}} e^{-\psi(y)} \,dy \\
& \leq \frac12 \beta^{-2}.
\end{align*}
Conclude $\E |\log Y| \leq \log \beta + C$ and therefore $y_1 = \exp \E \log Y$ satisfies $c \beta^{-1} \leq y_1 \leq C \beta$.  Using $|\psi''(y)| \leq \alpha (1 + y^{-4})$, compute
\begin{equation*}
\int_0^{y_1-t} e^{-\psi} + \int_{y_1+t}^\infty e^{-\psi} \geq e^{-C \alpha \beta^4 t^2} \int_{y_1-t}^{y_1+t} e^{-\psi} 
\end{equation*}
for $0 < t < c \beta^{-1}$.  Using Markov's inequality, compute
\begin{equation*}
t^{-2} \V \log Y
\geq \P(|\log Y - \log y_1| \geq t )
\geq \P(|Y - y_1| \geq C \beta t)
\geq e^{- C \alpha \beta^{6} t^2}
\end{equation*}
for $0 < t < c \beta^{-1}$.  Conclude by setting $t^{-2} = \alpha \beta^6$.
\end{proof}

Combining the previous two lemmas yields the main result.

\begin{lemma}
\label{l.scalarfluctions}
If $\ep > 0$, $|\lambda| < \ep^{-1}$, $M \geq C_\ep$, and $1 < k < N$, then, with probability at least $1 - M^{-c_\ep}$,
\begin{equation*}
\V ( \log S_k | D_{k-1}, \bar D_k, D_{k+1}, B_{k-1}, B_k ) \geq M^{-3-\ep}.
\end{equation*}
In particular, the lower bound \eref{expectedscalarfluctuations} holds.
\end{lemma}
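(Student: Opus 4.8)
The plan is simply to combine \lref{conditionallaw}, \lref{uniformlylogconcave}, and \lref{logvariance}; after that only exponent bookkeeping remains, so I do not expect a genuine obstacle in this step.

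Fix $1 < k < N$ and abbreviate the conditioning variables by $\mathcal F = (D_{k-1}, \bar D_k, D_{k+1}, B_{k-1}, B_k)$. First I would use \lref{conditionallaw}: the conditional law of $S_k$ given $\mathcal F$ has continuous density $s \mapsto Z_k^{-1} e^{-\phi_k(s)}$ on $(0,\infty)$, where $\phi_k$ is the $\mathcal F$-measurable function from \eref{conditionallaw}. In particular $\V(\log S_k | \mathcal F)$ equals $\V \log Y$ for a positive random variable $Y$ with density $Z_k^{-1} e^{-\phi_k}$; this variance is finite because $e^{-\phi_k}$ and $\log$ decay fast enough away from a bounded set, a point that is also verified inside the proof of \lref{logvariance}.

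Next I would invoke \lref{uniformlylogconcave}: on an event $G$ of probability at least $1 - M^{-c_\ep}$, the function $\phi_k$ and the number $S_k$ satisfy its three displayed inequalities. Working on a fixed realization in $G$, so that $S_k$ is a fixed positive number, I would apply \lref{logvariance} with $\psi = \phi_k + \log Z_k$ (whose first and second derivatives are those of $\phi_k$), $y_0 = S_k$, $\beta = M^\ep$, and $\alpha = M^{3+\ep}$. Since $M^{2-\ep} \geq M^\ep$ for $M$ large and $\ep < 1$, the first inequality of \lref{uniformlylogconcave} gives $S_k \phi_k'(S_k y) \geq M^{2-\ep} y \geq \beta y$ for $y \geq M^\ep = \beta$, the second gives $S_k \phi_k'(S_k y) \leq - M^{2-\ep} y^{-3} \leq - \beta y^{-3}$ for $0 < y \leq M^{-\ep} = \beta^{-1}$, and the third is exactly $|S_k^2 \phi_k''(S_k y)| \leq \alpha (1 + y^{-4})$. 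Hence \lref{logvariance} gives $\V \log Y \geq c \alpha^{-1} \beta^{-6} = c M^{-3-7\ep}$. Since this bound holds on all of $G$ and $\V(\log S_k | \mathcal F)$ is $\mathcal F$-measurable, it follows that $\V(\log S_k | \mathcal F) \geq c M^{-3-7\ep}$ with probability at least $\P(G) \geq 1 - M^{-c_\ep}$.

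The conclusion now follows by adjusting $\ep$: the previous paragraph is valid for every $\ep > 0$, so applying it with $\ep$ replaced by $\ep/8$ yields $\V(\log S_k | \mathcal F) \geq c M^{-3-7\ep/8} \geq M^{-3-\ep}$ for $M \geq C_\ep$, with probability at least $1 - M^{-c_\ep}$, which is the first assertion. Finally, since $\V(\log S_k | \mathcal F) \geq 0$ always, applying this with $\ep$ replaced by $\ep/2$ gives $\E \V(\log S_k | \mathcal F) \geq (1 - M^{-c_\ep}) M^{-3-\ep/2} \geq M^{-3-\ep}$ for $M \geq C_\ep$, i.e.\ \eref{expectedscalarfluctuations}. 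The only points that need any care are the choice $\beta = M^\ep$, $\alpha = M^{3+\ep}$ matching the hypotheses of \lref{logvariance} to the conclusions of \lref{uniformlylogconcave}, the final rescaling of $\ep$, and the (harmless) observation that although $G$ is not $\mathcal F$-measurable, the bound supplied by \lref{logvariance} holds pointwise on $G$ while the quantity $\V(\log S_k | \mathcal F)$ being bounded is $\mathcal F$-measurable.
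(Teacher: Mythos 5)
Your proposal is correct and is exactly the paper's argument: the paper's proof of this lemma is simply "immediate from Lemma \ref{l.uniformlylogconcave} and Lemma \ref{l.logvariance}," and you have filled in the same combination (conditional density from Lemma \ref{l.conditionallaw}, the choices $y_0=S_k$, $\beta=M^\ep$, $\alpha=M^{3+\ep}$, and a final rescaling of $\ep$) with the bookkeeping done correctly.
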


\begin{proof}
This is immediate from \lref{uniformlylogconcave} and \lref{logvariance}.
\end{proof}

\begin{bibdiv}
\begin{biblist}

\bib{Aizenman-Friedrich-Hundertmark-Schenker}{article}{
   author={Aizenman, Michael},
   author={Hundertmark, Dirk},
   author={Friedrich, Roland M.},
   author={Schenker, Jeffrey H.},
   title={Finite-volume fractional-moment criteria for Anderson
   localization},
   note={Dedicated to Joel L. Lebowitz},
   journal={Comm. Math. Phys.},
   volume={224},
   date={2001},
   number={1},
   pages={219--253},
   issn={0010-3616},
   review={\MR{1868998}},
   doi={10.1007/s002200100441},
}

\bib{Aizenman-Peled-Schenker-Shamis-Sodin}{article}{
   author={Aizenman, Michael},
   author={Peled, Ron},
   author={Schenker, Jeffrey},
   author={Shamis, Mira},
   author={Sodin, Sasha},
   title={Matrix regularizing effects of Gaussian perturbations},
   journal={Commun. Contemp. Math.},
   volume={19},
   date={2017},
   number={3},
   pages={1750028, 22},
   issn={0219-1997},
   review={\MR{3631932}},
   doi={10.1142/S0219199717500286},
}

\bib{Bourgade-Yau-Yin}{article}{
   author={Bourgade, Paul},
   author={Yau, Horng-Tzer},
   author={Yin, Jun},
   title={Random band matrices in the delocalized phase I: Quantum unique
   ergodicity and universality},
   journal={Comm. Pure Appl. Math.},
   volume={73},
   date={2020},
   number={7},
   pages={1526--1596},
   issn={0010-3640},
   review={\MR{4156609}},
   doi={10.1002/cpa.21895},
}

\bib{Bourgain}{article}{
   author={Bourgain, J.},
   title={A lower bound for the Lyapunov exponents of the random Schr\"{o}dinger
   operator on a strip},
   journal={J. Stat. Phys.},
   volume={153},
   date={2013},
   number={1},
   pages={1--9},
   issn={0022-4715},
   review={\MR{3100812}},
   doi={10.1007/s10955-013-0821-x},
}

\bib{Casati-Molinari-Izrailev}{article}{
   author={Casati, Giulio},
   author={Molinari, Luca},
   author={Izrailev, Felix},
   title={Scaling properties of band random matrices},
   journal={Phys. Rev. Lett.},
   volume={64},
   date={1990},
   number={16},
   pages={1851--1854},
   issn={0031-9007},
   review={\MR{1046365}},
   doi={10.1103/PhysRevLett.64.1851},
}

\bib{Cipolloni-Peled-Schenker-Shapiro}{article}{
  author={Giorgio Cipolloni},
  author={Ron Peled,}
  author={Jeffrey Schenker},
  author={Jacob Shapiro},
  note={Personal communication},
}

\bib{Peled-Schenker-Shamis-Sodin}{article}{
   author={Peled, Ron},
   author={Schenker, Jeffrey},
   author={Shamis, Mira},
   author={Sodin, Sasha},
   title={On the Wegner orbital model},
   journal={Int. Math. Res. Not. IMRN},
   date={2019},
   number={4},
   pages={1030--1058},
   issn={1073-7928},
   review={\MR{3915294}},
   doi={10.1093/imrn/rnx145},
}

\bib{Schenker}{article}{
   author={Schenker, Jeffrey},
   title={Eigenvector localization for random band matrices with power law
   band width},
   journal={Comm. Math. Phys.},
   volume={290},
   date={2009},
   number={3},
   pages={1065--1097},
   issn={0010-3616},
   review={\MR{2525652}},
   doi={10.1007/s00220-009-0798-0},
}

\bib{Shcherbina}{article}{
   author={Shcherbina, Tatyana},
   title={SUSY transfer matrix approach for the real symmetric 1d random
   band matrices},
   journal={Electron. J. Probab.},
   volume={27},
   date={2022},
   pages={Paper No. 24, 29},
   review={\MR{4379199}},
   doi={10.1214/22-ejp747},
}
		
\bib{Shcherbina-Shcherbina}{article}{
   author={Shcherbina, Mariya},
   author={Shcherbina, Tatyana},
   title={Universality for 1d random band matrices},
   journal={Comm. Math. Phys.},
   volume={385},
   date={2021},
   number={2},
   pages={667--716},
   issn={0010-3616},
   review={\MR{4278281}},
   doi={10.1007/s00220-021-04135-6},
}

\bib{Tao}{book}{
   author={Tao, Terence},
   title={Topics in random matrix theory},
   series={Graduate Studies in Mathematics},
   volume={132},
   publisher={American Mathematical Society, Providence, RI},
   date={2012},
   pages={x+282},
   isbn={978-0-8218-7430-1},
   review={\MR{2906465}},
   doi={10.1090/gsm/132},
}

\bib{Yang-Yau-Yin-1}{article}{
   author={Yang, Fan},
   author={Yau, Horng-Tzer},
   author={Yin, Jun},
   title={Delocalization and quantum diffusion of random band matrices in high dimensions I: Self-energy renormalization},
   note={arXiv:2104.12048}
}

\bib{Yang-Yau-Yin-2}{article}{
   author={Yang, Fan},
   author={Yau, Horng-Tzer},
   author={Yin, Jun},
   title={Delocalization and quantum diffusion of random band matrices in high dimensions II: T-expansion },
   note={arXiv:2107.05795}
}

\end{biblist}
\end{bibdiv}

\end{document}